\newtheorem{theorem}{Theorem}
\newtheorem{proposition}[theorem]{Proposition}
\newtheorem{definition}[theorem]{Definition}
   \renewcommand{\section}{\@startsection{section}{1}{0mm}
   {\baselineskip}%
   {\baselineskip}{\normalfont\normalsize\scshape\centering}}%
\begin{document}

\begin{center}
	\textsc{\Large On the Entropy of a Family of Random Substitutions} \vspace{1.5ex}
	
	\textsc{Johan Nilsson}\vspace{1.5ex}
	
	{\small\texttt{jnilsson@math.uni-bielefeld.de}}
	
	{\small\today}
\end{center}

\begin{abstract}
The generalised random Fibonacci chain is a stochastic extension of the classical Fibonacci substitution and is defined as the rule mapping $0\mapsto 1$ and $1 \mapsto 1^i01^{m-i}$ with probability $p_i$, where $p_i\geq 0$ with $\sum_{i=0}^m p_i =1$, and where the random rule is applied each time it acts on a $1$. We show that the topological entropy of this object is given by the growth rate of the set of inflated generalised random Fibonacci words.
\end{abstract}

{\small\noindent 2010 Mathematics Subject Classification: 68R15 Combinatorics on words, 05A16 Asymptotic enumeration, 37B10 Symbolic dynamics.}

\section{Introduction}

Meyer sets form an important class of point sets that can be considered as generalisations of lattices \cite{meyer,moody1}. They have recently been studied in connection with mathematical models of quasicrystals, see \cite{moody2,lagarias,baake11} and references therein. One of their common features is the existence of a non-trivial point spectrum. It shows up as a relatively dense set of point measures in their diffraction measure \cite{strungaru}, even under the constraint that one only considers point measures whose intensity is at least a given positive fraction of the the central intensity.  

The majority of papers so far has concentrated on deterministic Meyer sets, such as those obtained from Pisot inflations or from the projection method. All these examples have zero entropy. On the other hand, it is well known that Meyer sets with entropy exist (such as the set $2\mathbb{Z}$ combined with an arbitrary subset of $2\mathbb{Z}+1$: compare \cite{baake}) but relatively little is known about them. Despite having entropy, the spectral result of Strungaru \cite{strungaru} still applies, and the point spectrum is non-trivial. 

One possibility to define special classes of Meyer sets with entropy is via random inflation rules, as proposed in \cite{godreche}. Here we follow their line of approach for a family of generalised Fibonacci substitutions, parametrised by a number $m\in\mathbb{N}$. The spectral nature for $m=1$ was studied in \cite{godreche,claudia}. Here we reconsider this case and its generalisation from the point of view of entropy. In fact we prove that the topological entropy can simply be calculated by a suitable (and easily accessible) subset of sub-words (or factors), thus proving an (implicit) conjecture from \cite{godreche} and its generalisation to the entire family.   

Let us introduce the generalised random Fibonacci chain by the generalised substitution
\begin{equation}
\label{eq: def of Theta}
	\theta : 
	\left\{
	\begin{array}{rcl}
		0 & \mapsto & 1 \\
		1 & \mapsto &
		\left\{
		\begin{array}{ll}
			01^m & \textnormal{with probability $p_0$} \\
			101^{m-1} & \textnormal{with probability $p_1$} \\
			\vdots \\
			1^m0 & \textnormal{with probability $p_m$}
		\end{array}
		\right.
	\end{array}
	\right.
\end{equation}
where $p_i\geq0$ and $\sum_{i=0}^{m}p_i=1$ and where the random rule is applied each time $\theta$ acts on a $1$. 

In \cite{godreche} Godr\`eche and Luck define the random Fibonacci chain by the generalised substitution given by $\theta$, from (\ref{eq: def of Theta}), in the special case $m=1$. They introduce the random Fibonacci chain when studying quasi-crystal\-line structures and tilings in the plane. In their paper it is claimed without proof that the topological entropy of the random Fibonacci chain is given by the growth rate of the set of inflated random Fibonacci words. In \cite{nilsson} this fact was proven, and we shall here give a proof of a more general result when we consider the generalised random Fibonacci substitution given by $\theta$ from (\ref{eq: def of Theta}). 

Before we can state our main theorem, we need to introduce some notation. A word $w$ over an alphabet $\Sigma$ is a finite sequence $w_1w_2\ldots w_n$ of symbols from $\Sigma$. We let here $\Sigma = \{0,1\}$. We denote a sub-word or a factor of $w$ by $w[a,b] = w_a w_{a+1}w_{a+2}\ldots w_{b-1}w_b$ and similarly for a set of words we let $W[a,b] =\{w[a,b]: w\in W\}$. By $|\cdot|$ we mean the length of a word and the cardinality of a set. Note that $|w[a,b]| = b-a+1$.

For two words $u = u_1u_2u_3\ldots u_n$ and $v = v_1v_2v_3\ldots v_m$ we denote by $uv$ the concatenation of the two words, that is, $uv = u_1u_2u_3\ldots u_n v_1 v_2 \ldots v_m$. Similarly we let for two sets of words $U$ and $V$ their product be the set $UV = \{uv: u\in U, v\in V\}$ containing all possible concatenations.

Letting $\theta$ act on the word $0$ repeatedly yields an infinite sequence of words $r_n = \theta^{n-1}(0)$. We know that $r_1=0$ and $r_2=1$. But $r_3$ is one of the words $1^i01^{m-i}$ for $ 0\leq i\leq m$ with probability $p_i$ The sequence $\{r_n\}_{n=1}^{\infty}$ converges in distribution to an infinite random word $r$. We say that $r_n$ is an inflated word (under $\theta$) in generation $n$ and we introduce here sets that correspond to all inflated words in generation $n$;

\begin{definition}
\label{def: recursive def An}
For a fixed $m\geq 1$ let $A_0 := \emptyset$, $A_{1} := \{ 0 \}$ and $A_{2} := \{ 1 \}$ and for $n\geq3$  define recursively 
\begin{equation}
\label{eq: rec def An}
	A_n = \bigcup_{i=0}^{m}\prod_{j=0}^{m}A_{n-1-\delta_{ij}},
\end{equation}
where $\delta$ is the Kronecker symbol, that is $\delta_{ij}=1$ if $i=j$ and 0 otherwise and we let $A := \lim_{n\to\infty}A_n$. 
\end{definition}

We shall later on, as a direct consequence of Proposition \ref{prop: An prefix}, see that the set $A$ is a well-defined set. The integer $m\geq1$ is arbitrary, but fixed during the whole process. Therefore we choose to not have $m$ as a parameter when denoting a set of inflated generalised random Fibonacci words, $A_n$.

It is clear from the definition of $A_n$ that all elements in $A_n$ have the same length, that is, for all $x,y \in A_n$ we have $|x| = |y|$. We shall frequently use the length of the elements in $A_n$, and therefore we introduce the notation 
\[	l_n := |x| \quad \textnormal{for}\quad x\in A_n,\quad n\geq1.
\]
From the recursion (\ref{eq: rec def An}) and the definition of $l_n$ we have immediately the following proposition

\begin{proposition}
\label{prop: l_n rec}
The numbers $l_n$ fulfil the recursion relation, $l_1 =1$,  $l_2 = 1$ and $l_n = m\cdot l_{n-1} + l_{n-2}$ for $n\geq 3$.
\end{proposition}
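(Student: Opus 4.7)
The statement is a direct length computation from the recursion, so the plan is essentially just to unwind Definition~\ref{def: recursive def An} and add up lengths, using a simultaneous induction on $n$.

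First I would fix the base cases: $A_1 = \{0\}$ and $A_2 = \{1\}$ give $l_1 = l_2 = 1$ immediately. For the inductive step I would carry along two assertions simultaneously, namely (i) for all $k < n$ the elements of $A_k$ share a common length $l_k$, and (ii) these lengths satisfy the claimed recursion. Assertion (i) is the thing the paper calls ``clear'', but it is what makes the definition $l_n := |x|$ for $x \in A_n$ unambiguous, and it has to be verified in lockstep with (ii).

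Given the inductive hypothesis, the key observation is purely combinatorial: for each fixed $i \in \{0,\ldots,m\}$, the Kronecker symbol $\delta_{ij}$ equals $1$ for exactly one value of $j$ (namely $j=i$) and $0$ for the remaining $m$ values. Hence the product $\prod_{j=0}^{m} A_{n-1-\delta_{ij}}$ is a concatenation of one copy of $A_{n-2}$ and $m$ copies of $A_{n-1}$. Since $|uv|=|u|+|v|$ for concatenation, every element of this product has length
\[
	m \cdot l_{n-1} + l_{n-2},
\]
which is in particular independent of the index $i$. Taking the union over $i$ therefore produces a set $A_n$ all of whose elements share this common length, proving both (i) for $k=n$ and the recursion $l_n = m\cdot l_{n-1} + l_{n-2}$.

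There is no real obstacle: the only subtlety is remembering that the well-definedness of $l_n$ is itself part of what the induction proves, so the two statements have to be handled together rather than quoting (i) as already established. Once that is acknowledged, the computation is a one-line sum of lengths.
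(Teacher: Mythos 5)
Your proof is correct and follows exactly the route the paper intends: the paper offers no written proof, asserting the recursion is immediate from Definition~\ref{def: recursive def An}, and your unwinding (each product $\prod_{j=0}^{m}A_{n-1-\delta_{ij}}$ is $m$ copies of $A_{n-1}$ and one of $A_{n-2}$, independent of $i$) is precisely the computation being left implicit. Your extra care in proving well-definedness of $l_n$ by simultaneous induction is a reasonable tightening of what the paper dismisses as ``clear''.
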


For a word $w$ we say that $x$ is a \emph{factor} of $w$ if there are two words $u,v$ such that $w= uxv$. The factor set $F(S,n)$, of a set of words $S$, is the set of all factors of length $n$ of the words in $S$.  
We introduce the abbreviated notation $F_n$ for the special factor sets of the generalised random Fibonacci words,
\[	F_n := F(A,l_n).
\]
It seems to be a hard problem to give an explicit expression for the size of $|F_n|$. We have to leave that question open, but we shall give a rough upper bound of $|F_n|$. 

The topological entropy of the generalised random Fibonacci chain is de\-fined as the limit $\lim_{n\to\infty} \frac{1}{n}\log |F(A,n)|$. The existence of this limit is direct by Fekete's lemma \cite{fekete} since we have sub-additivity, $\log|F(A,n+p)|\leq\log|F(A,n)|+\log|F(A,p)|$.
We can now state the main result in this paper.

\begin{theorem}
\label{thm: lim An = lim Fn}
The logarithm of the growth rate of the size of the set of inflated generalised random Fibonacci words equals the topological entropy of the generalised random Fibonacci sequence, that is 
\begin{equation}
\label{eq: lim An = lim Fn}
	\lim_{n\to\infty}\frac{\log|A_n|}{l_n} 
	= \lim_{n\to\infty}\frac{\log|F_n|}{l_n}.
\end{equation}
\end{theorem}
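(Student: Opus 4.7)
\emph{Easy direction.} From (\ref{eq: rec def An}), taking $i=1$ shows that every element of $A_n$ is the leftmost factor of some element of $A_{n+1}$: the $i=1$ term is $A_n\cdot A_{n-1}\cdot A_n^{m-1}$, whose leftmost factor is $A_n$. Iterating, each element of $A_n$ is a prefix of some element of $A_{n+k}$ for every $k\geq 0$ and hence a factor of an element of $A$. Therefore $A_n\subseteq F_n$, which gives $\log|A_n|/l_n\leq\log|F_n|/l_n$, and the inequality passes to the limit.

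\emph{Hard direction: strategy.} The plan is to establish a bound of the form
\[
|F_n|\ \leq\ P(l_n)\,|A_n|
\]
where $P$ grows only polynomially in $l_n$. Taking logarithms and dividing by $l_n$ then yields $\lim\log|F_n|/l_n\leq\lim\log|A_n|/l_n$, since $\log P(l_n)/l_n\to 0$.

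\emph{Decomposition.} To produce such a bound, fix $f\in F_n$ and realise $f$ as a factor of some infinite word $w\in A$. Iterating (\ref{eq: rec def An}) gives a natural decomposition of $w$ into consecutive super-blocks drawn from $A_{n-1}\cup A_{n-2}$, of lengths $l_{n-1}$ and $l_{n-2}$. By Proposition \ref{prop: l_n rec} the ratios $l_n/l_{n-1}$ and $l_n/l_{n-2}$ are bounded by constants depending only on $m$, so the window of length $l_n$ occupied by $f$ meets at most a bounded number $K=K(m)$ of such super-blocks. One may then write $f = s\cdot B_2\cdots B_{K-1}\cdot q$, where $s$ is a suffix of a boundary super-block, $q$ is a prefix of a boundary super-block, and the interior blocks $B_2,\ldots,B_{K-1}$ lie entirely inside $f$. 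The interior blocks are therefore read off directly from $f$ once the offset of the first cut (ranging over at most $l_n$ values) is fixed, and the combinatorial cost of the interior is absorbed into the polynomial factor $P(l_n)$.

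\emph{Main obstacle.} What remains is to bound the joint contribution of the two partial boundary super-blocks by $O(|A_n|)$ rather than the naive $O(|A_n|^2)$. My plan is to invoke a recognisability-type argument for $\theta$: knowing $f$ together with its offset, the identity of the left boundary super-block should determine the right boundary super-block up to only polynomially many choices coming from the super-structure at the next higher level. Formally, I aim to build a map of multiplicity at most $P(l_n)$ from $F_n$ into $(A_{n-1}\cup A_{n-2})\times\{0,1,\ldots,l_n-1\}$, from which the desired inequality follows after noting that $|A_{n-1}|+|A_{n-2}|$ is at most a constant multiple of $|A_n|$ (again via (\ref{eq: rec def An})). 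For classical deterministic Pisot substitutions this kind of recognisability is standard; the difficulty here is that $\theta$ is genuinely random, so a priori distinct realisations of the super-block contents could agree on a window of length $l_n$. Controlling this residual ambiguity, and showing it costs only polynomially in $l_n$, is the heart of the proof.
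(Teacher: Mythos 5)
Your overall strategy is the correct one and coincides with the paper's: establish a multiplicative bound $|F_n|\leq P(l_n)\,|A_n|$ with $P$ polynomial in $l_n$ (the paper obtains $P(l_n)=4^{mn}l_n$, which is indeed polynomial in $l_n$ because $l_n$ grows like $\big(\tfrac{m+\sqrt{m^2+4}}{2}\big)^n$), and then use $\log P(l_n)/l_n\to0$. Your easy direction is fine. You have also correctly located the crux: after cutting a length-$l_n$ window into full interior blocks and two partial boundary blocks, charging the two boundary blocks independently overshoots $|A_n|$ by an exponentially large factor, so the \emph{joint} contribution of the left suffix and the right prefix must be controlled.

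That crux, however, is exactly what you do not prove, and you say so yourself. The ``recognisability-type argument'' you invoke is not available off the shelf precisely because $\theta$ is random, and the statement actually needed is not one of small multiplicity (left block determines right block) but a counting statement about prefix and suffix sets at an arbitrary cut point. The paper supplies this through three ingredients you would still have to build. First, prefix stabilisation, $A_n[1,l_n-1]=A_{n+k}[1,l_n-1]$ (Proposition \ref{prop: An prefix}), which also makes $A$ well defined and yields the reduction $F_n=F(A_n^2,l_n)$ (Propositions \ref{prop: FAn+1 = FAnAn} and \ref{prop: Fn = FAA}) --- so only two adjacent level-$n$ blocks ever need to be considered, which is tidier than your $K(m)$ super-blocks at levels $n-1$ and $n-2$. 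Second, a union-free superset $A_n\subseteq P_{n-1}C_{n-1}^{m-1}S_{n-2}$ (Proposition \ref{prop: sup An}). Third, and decisively, the inductive estimate $\big|A_n[1,k]\big|\cdot\big|A_n[k+1,l_n]\big|\leq 4^{mn}|A_n|$ for every $k$ (Proposition \ref{prop: A[k]A[k]leq A}), which is the quantitative substitute for recognisability: it bounds the product of the number of admissible left pieces and right pieces at any cut by $|A_n|$ times a factor polynomial in $l_n$, and Proposition \ref{prop: F_n leq 4An} then follows by summing over the $l_n$ cut positions. Without an argument of this kind your plan does not close; as it stands the proposal is an accurate road map with its central step missing.
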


The outline of the paper is that we start by studying the set $A_n$ by looking at sets of prefixes of the set $A_n$. Next we give a finite method for finding the factor set $F_n$ and finally we present an estimate of $|F_n|$ in terms of $|A_n|$, leading to the proof of Theorem \ref{thm: lim An = lim Fn}.

\section{Sets of Inflated Words}

In this section we shall study the sets of inflated generalised Fibonacci words, the $A_n$ sets. The main results of the section give how to calculate the size of $A_n$. The underlying idea in proving our results here is to rewrite and consider the union (\ref{eq: rec def An}) as a union of two sets only. We obtain this by collecting  all but the last union term in (\ref{eq: rec def An}) into one set and similarly all but the first one into a second set. We can then factor out an $A_n$-term from these sets, and thereby apply an induction argument on the achieved smaller set. 

The first result of this section gives how to simplify the overlap we obtain when rewriting and generalising  (\ref{eq: rec def An}) in the manner just described above.

\begin{proposition}
\label{prop: overlap}
For a fixed $m\geq1$ we have for $k\geq 1$ and $n\geq 2$
\begin{multline}
	 \left(A_{n} \Big(\bigcup_{i=0}^{k}\prod_{j=0}^{k} A_{n-\delta_{ij}}\Big)\right)
		\bigcap
		\left(\Big( \bigcup_{i=0}^{k}\prod_{j=0}^{k} A_{n-\delta_{ij}}\Big) A_{n}\right) = \\
	= A_{n}\Big(\bigcup_{i=0}^{k-1}\prod_{j=0}^{k-1} A_{n-\delta_{ij}}\Big) A_{n}.
\label{eq: overlap}
\end{multline}
\end{proposition}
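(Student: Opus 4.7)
The statement is a set equality, so I would prove the two inclusions separately.

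The direction $\supseteq$ is straightforward. Take $w = a\cdot m \cdot b$ in the right-hand side, with $a,b\in A_n$ and $m$ a product of $k$ blocks containing exactly one factor from $A_{n-1}$. Regrouping as $w = a\cdot(mb)$ displays $mb$ as a product of $k+1$ blocks with a single $A_{n-1}$ factor (at one of the positions $0,\dots,k-1$), so $w$ lies in the first factor of the intersection on the left-hand side; the rewriting $w = (am)\cdot b$ handles the second factor similarly.

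For $\subseteq$, suppose $w$ lies in the intersection and fix decompositions $w = a\cdot u_0u_1\cdots u_k = v_0v_1\cdots v_k\cdot b$ with $a,b\in A_n$ and distinguished indices $i,i'\in\{0,\dots,k\}$ for which $u_i,v_{i'}\in A_{n-1}$ and all other factors lie in $A_n$. I would then argue by cases. If $i\leq k-1$ then $u_k\in A_n$, and the bracketing $w = a\cdot(u_0\cdots u_{k-1})\cdot u_k$ already puts $w$ in the right-hand side. By the symmetric argument, if $i'\geq 1$ then $v_0\in A_n$, and $w = v_0\cdot(v_1\cdots v_k)\cdot b$ works.

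The remaining, genuinely nontrivial, case is $i=k$ and $i'=0$. Here the $A_{n-1}$ block sits at the extreme right in the first decomposition and the extreme left in the second, so the first $l_{n-1}$ characters of $a$ coincide with $v_0\in A_{n-1}$ and the last $l_{n-1}$ characters of $b$ coincide with $u_k\in A_{n-1}$. My plan is to keep $a$ and $b$ as the outer $A_n$-blocks of the target decomposition and to exhibit the middle $w[l_n+1,\,kl_n+l_{n-1}]$ as an element of $\bigcup_{i=0}^{k-1}\prod_{j=0}^{k-1} A_{n-\delta_{ij}}$. The natural candidate is $u_0\cdots u_{k-2}\cdot u_{k-1}[1,l_{n-1}]$, which works provided the length-$l_{n-1}$ prefix of the $A_n$-block $u_{k-1}$ lies in $A_{n-1}$; the remaining suffix $u_{k-1}[l_{n-1}+1,l_n]$ together with $u_k$ then automatically reassembles into the $A_n$-block $b$ supplied by the second decomposition. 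Showing that such an $A_{n-1}$-prefix exists---by decomposing $u_{k-1}$ via the recursion (\ref{eq: rec def An}) and playing this off against the constraints coming from the second decomposition---is the technical heart of the argument. I expect a short structural lemma on which $A_n$-words admit an $A_{n-1}$-prefix (in the spirit of Proposition~\ref{prop: An prefix}) to make this verification clean.
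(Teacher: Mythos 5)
Your reduction of the inclusion $\subseteq$ to the single configuration $i=k$, $i'=0$ is correct and matches the paper, which likewise observes that only the terms $A_n^{k+1}A_{n-1}$ and $A_{n-1}A_n^{k+1}$ can fail to lie in the right-hand side. The gap is precisely the step you defer to a ``short structural lemma'': you need $u_{k-1}[1,l_{n-1}]\in A_{n-1}$, and no local property of $u_{k-1}$ will deliver this. It is false that the length-$l_{n-1}$ prefix of a word in $A_n$ lies in $A_{n-1}$, and it does not help that the same block is simultaneously the length-$l_{n-1}$ suffix of the word $v_k\in A_n$ coming from the second decomposition. Concretely, for $m=1$ one has $A_3=\{01,10\}$ and $A_4=A_2A_3\cup A_3A_2=\{101,110,011\}$ with $l_3=2$; the block $11$ is the length-$l_3$ prefix of $110\in A_4$ and the length-$l_3$ suffix of $011\in A_4$, yet $11\notin A_3$. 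Nor can Proposition~\ref{prop: An prefix} rescue the plan: it controls prefixes of length $l_{n-1}-1$, one character short of what you need, and that last character is exactly where the obstruction sits.

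What actually forces the conclusion is a global compatibility between the two decompositions that your candidate lemma discards. In the case $i=k$, $i'=0$ the last $l_n+l_{n-1}$ characters of $w$ lie simultaneously in $A_nA_{n-1}$ (namely $u_{k-1}u_k$ with $u_k\in A_{n-1}$) and in $(A_nA_n)[1+l_n-l_{n-1},2l_n]$ (namely the corresponding suffix of $v_kb$ with $v_k,b\in A_n$), and it is the interplay of these two constraints --- in the toy example above, $11$ as a prefix forces the next letter to be $0$, which is then incompatible with the other decomposition --- that produces the required $A_{n-1}$-block. This is how the paper closes the case: it assumes a word of the intersection lies outside the right-hand side and derives a contradiction by comparing the suffix structures of the two surviving terms. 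To complete your argument you would need to prove a statement of that intersection type (roughly, that a word of $A_nA_{n-1}$ which is also a length-$(l_n+l_{n-1})$ suffix of a word of $A_nA_n$ splits as an $A_{n-1}$-block followed by an $A_n$-block), which is a genuinely two-sided fact and not a prefix lemma about a single $A_n$-word.
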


\begin{proof}
It is clear that 
\[	A_{n} \Big(\bigcup_{i=0}^{k}\prod_{j=0}^{k} A_{n-\delta_{ij}}\Big)
	\supseteq
	A_{n} \Big(\bigcup_{i=0}^{k-1}\prod_{j=0}^{k} A_{n-\delta_{ij}}\Big)
	=
	A_{n} \Big(\bigcup_{i=0}^{k-1}\prod_{j=0}^{k-1} A_{n-\delta_{ij}}\Big)A_{n}
\]
and symmetrically 
\[	\Big(\bigcup_{i=0}^{k}\prod_{j=0}^{k} A_{n-\delta_{ij}}\Big)A_{n}  
	\supseteq
	A_{n} \Big(\bigcup_{i=0}^{k-1}\prod_{j=0}^{k-1} A_{n-\delta_{ij}}\Big)A_{n}.
\]
This gives the inclusion 
\begin{multline}
	 \left(A_{n} \Big(\bigcup_{i=0}^{k}\prod_{j=0}^{k} A_{n-\delta_{ij}}\Big)\right)
		\bigcap
		\left(\Big( \bigcup_{i=0}^{k}\prod_{j=0}^{k} A_{n-\delta_{ij}}\Big) A_{n}\right) \supseteq\\
	\supseteq A_{n}\Big(\bigcup_{i=0}^{k-1}\prod_{j=0}^{k-1} A_{n-\delta_{ij}}\Big) A_{n}.
	\label{eq: overlap supset}
\end{multline}
To prove that we actually have equality in (\ref{eq: overlap supset}); assume for contradiction that the inclusion in (\ref{eq: overlap supset}) is strict. Then there is an element $x$ in the left hand side intersection which is not in the right hand side set of (\ref{eq: overlap supset}). The word $x$ is then an element in 
\begin{equation}
	\left(A_{n} \Big(\bigcup_{i=0}^{k}\prod_{j=0}^{k} A_{n-\delta_{ij}}\Big)\right)
	\setminus \left(A_{n}\Big(\bigcup_{i=0}^{k-1}\prod_{j=0}^{k-1} A_{n-\delta_{ij}}\Big) A_{n}\right)
	\label{eq: overlap left set}
\end{equation}
and then also an element in the set
\begin{equation}
	\left(\Big(\bigcup_{i=0}^{k}\prod_{j=0}^{k} A_{n-\delta_{ij}}\Big)A_{n}\right)
	\setminus \left(A_{n}\Big(\bigcup_{i=0}^{k-1}\prod_{j=0}^{k-1} A_{n-\delta_{ij}}\Big) A_{n}\right).
	\label{eq: overlap right set}
\end{equation}
From (\ref{eq: overlap left set}) we see that $x$ must be an element in 
\[	\big(A_{n}^{k+1}A_{n-1}\big) \setminus \left(A_{n}\Big(\bigcup_{i=0}^{k-1}\prod_{j=0}^{k-1} A_{n-\delta_{ij}}\Big) A_{n}\right).
\]
This gives that $x$ must end with a word in $(A_{n}A_{n-1})\setminus \big((A_{n}A_{n})[1+l_{n}-l_{n-1}, 2l_n]\big)$. But then $x$ cannot be an element of (\ref{eq: overlap right set}), since all its elements ends with a word in $\big((A_{n}A_{n})[1+l_{n}-l_{n-1}, 2l_n]\big)$. A contradiction, hence must we have equality in (\ref{eq: overlap supset}). 
\end{proof}

Let us turn to proving a recursion for the size of the $A_n$ sets. The special case $k=m=1$ of (\ref{eq: An short rec}) was already considered by Godr\`eche and Luck in \cite{godreche}, see also Nilsson \cite{nilsson}.

\begin{proposition}
\label{prop: product k rec}
For a fixed $m\geq1$ we have for $k\geq 1$ and $n\geq 2$
\begin{equation}
\label{eq: product k rec}
	\left|\bigcup_{i=0}^{k}\prod_{j=0}^{k} A_{n-\delta_{ij}} \right|= \frac{m(n-2)+k+1}{m(n-2)+1}|A_{n}|^k|A_{n-1}|
\end{equation}
and in particular we have the special case $k=m$,
\begin{equation}
\label{eq: An short rec}
	|A_{n+1}|=\frac{m(n-1)+1}{m(n-2)+1}|A_{n}|^m|A_{n-1}|.
\end{equation}
\end{proposition}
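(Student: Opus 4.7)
The plan is to prove Proposition~\ref{prop: product k rec} by double induction, outer on $n$ and inner on $k$, with Proposition~\ref{prop: overlap} as the main combinatorial tool. For brevity I will write $U_k^{(n)} := \bigcup_{i=0}^{k}\prod_{j=0}^{k} A_{n-\delta_{ij}}$, so the target becomes $|U_k^{(n)}| = c_k\,|A_n|^k|A_{n-1}|$ with $c_k := (m(n-2)+k+1)/(m(n-2)+1)$. First I would establish the two-set decomposition
\[
U_k^{(n)} = A_n\, U_{k-1}^{(n)} \,\cup\, U_{k-1}^{(n)}\, A_n \qquad (k \geq 1)
\]
by splitting the outer union at $i=0$ and at $i=k$ and re-indexing; the corner terms $A_{n-1}A_n^k$ and $A_n^k A_{n-1}$ that drop out are absorbed into $U_{k-1}^{(n)}A_n$ and $A_n U_{k-1}^{(n)}$ respectively, since, for example, $A_{n-1}A_n^{k-1}$ is the $i=0$ summand of $U_{k-1}^{(n)}$.

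Every factor in the products appearing has fixed length ($l_n$ or $l_{n-1}$), so concatenation is uniquely decomposable and $|A_n V| = |A_n|\,|V|$, $|A_n V A_n| = |A_n|^2\,|V|$ wherever needed. Proposition~\ref{prop: overlap} then supplies, for $k \geq 2$,
\[
|A_n\, U_{k-1}^{(n)} \,\cap\, U_{k-1}^{(n)}\, A_n| = |A_n|^2\,|U_{k-2}^{(n)}|,
\]
and inclusion--exclusion yields the recursion $|U_k^{(n)}| = 2|A_n|\,|U_{k-1}^{(n)}| - |A_n|^2\,|U_{k-2}^{(n)}|$. Since $c_k = 1 + k/(m(n-2)+1)$ is affine in $k$, the identity $c_k - 2c_{k-1} + c_{k-2} = 0$ is a one-line check, so the inner induction on $k$ runs automatically from any two consecutive base values.

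The outer base case $n=2$ is immediate: $U_k^{(2)} = \{1^i 0 1^{k-i}: 0 \leq i \leq k\}$ has $k+1$ elements, matching the formula at $m(n-2)=0$. For the outer inductive step I fix $n \geq 3$ and assume the statement for all smaller values. The inner base $k=0$ reads $|U_0^{(n)}| = |A_{n-1}|$, which is trivial. The genuinely new ingredient is the inner base $k=1$: here I would apply Proposition~\ref{prop: overlap} not at level $n$ but at level $n-1$ with parameter $k = m$, using the identity $U_m^{(n-1)} = A_n$ from Definition~\ref{def: recursive def An} to obtain
\[
A_{n-1} A_n \,\cap\, A_n A_{n-1} \;=\; A_{n-1}\, U_{m-1}^{(n-1)}\, A_{n-1}.
\]
The outer inductive hypothesis at $n-1$ provides $|U_{m-1}^{(n-1)}|$ and, via its own $k=m$ instance, also $|A_n|$; substituting and simplifying collapses the intersection to $\frac{m(n-2)}{m(n-2)+1}|A_n|\,|A_{n-1}|$, whence $|U_1^{(n)}|$ takes the required value by inclusion--exclusion on $A_{n-1}A_n \cup A_n A_{n-1}$.

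The main obstacle is precisely this $k=1$ base case: Proposition~\ref{prop: overlap} only feeds into the two-step recursion from $k \geq 2$ onward, so $|U_1^{(n)}|$ is not accessible from within level $n$. The trick is to pull the computation down one level and invoke the overlap result at $(n-1, m)$, exploiting $U_m^{(n-1)} = A_n$ to bring the question into the reach of the outer hypothesis. The special case (\ref{eq: An short rec}) is then the $k=m$ instance of (\ref{eq: product k rec}), since $A_{n+1} = U_m^{(n)}$ by Definition~\ref{def: recursive def An}.
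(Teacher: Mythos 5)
Your proposal is correct and follows essentially the same route as the paper: a double induction (outer on $n$, inner on $k$) built on the two-set decomposition $\bigcup_{i=0}^{k}\prod_{j=0}^{k}A_{n-\delta_{ij}} = A_nU_{k-1}^{(n)}\cup U_{k-1}^{(n)}A_n$, with Proposition~\ref{prop: overlap} and inclusion--exclusion driving the inner step and the affine dependence of the prefactor on $k$ closing the recursion. The only point of divergence is the inner base case $k=1$: the paper rewrites $A_pA_{p+1}\cup A_{p+1}A_p$ as $\bigcup_{i=0}^{m+1}\prod_{j=0}^{m+1}A_{p-\delta_{ij}}$ and invokes the outer induction hypothesis at level $p$ with parameter $m+1$, whereas you apply Proposition~\ref{prop: overlap} one level down at $(n-1,m)$ together with inclusion--exclusion; both are valid and yield the same value $\frac{m(n-2)+2}{m(n-2)+1}\,|A_n|\,|A_{n-1}|$.
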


\begin{proof}
We give a proof by induction on $n$. For the basis case $n=2$ we have
\[	\left| \bigcup_{i=0}^{k}\prod_{j=0}^{k} A_{2-\delta_{ij}} \right|  
	= k+1 
	= \frac{m(2-2)+k+1}{m(2-2)+1} |A_{2}||A_{1}| 
\]
for $1 \leq k$. Now assume for induction that (\ref{eq: product k rec}) holds for $ 2 \leq n\leq p $. Then for the induction step $n=p+1$ we give a proof by induction on $k$. For the basis case $k=1$ we have by the recursive definition of the $A_n$ sets, the induction assumption and by (\ref{eq: An short rec})
\begin{align*}
	\left| \bigcup_{i=0}^{1}\prod_{j=0}^{1} A_{p+1-\delta_{ij}} \right| 
	&= \left| A_p A_{p+1} \cup A_{p+1}A_{p} \right|\\
	&= \left| \Bigg(A_p \Big(\bigcup_{i=0}^{m}\prod_{j=0}^{m} A_{p-\delta_{ij}} \Big)\Bigg)
		\bigcup
		\Bigg(\Big(\bigcup_{i=0}^{m}\prod_{j=0}^{m} A_{p-\delta_{ij}} \Big) A_p\Bigg) \right| \\
	&= \left| \bigcup_{i=0}^{m+1}\prod_{j=0}^{m+1} A_{p-\delta_{ij}} \right|\\
	&= \frac{m(p-2)+m+2}{m(p-2)+1} |A_{p}|^{m+1}|A_{p-1}|\\
	&= \frac{m(p-2)+m+2}{m(p-2)+1} \cdot \frac{m(p-2)+1}{m(p-1)+1}|A_{p+1}||A_{p}|\\
	&= \frac{m(p-1)+1+1}{m(p-1)+1} |A_{p+1}||A_{p}|,
\end{align*}
which completes the first basis step. For the second basis step $k=2$ we similarly have
\begin{align*}
	&\left| \bigcup_{i=0}^{2}\prod_{j=0}^{2} A_{p+1-\delta_{ij}} \right| = \\
	&= \left| \Bigg(A_{p+1} \Big(\bigcup_{i=0}^{1}\prod_{j=0}^{1} A_{p+1-\delta_{ij}} \Big)\Bigg)
		\bigcup
		\Bigg(\Big(\bigcup_{i=0}^{1}\prod_{j=0}^{1} A_{p+1-\delta_{ij}} \Big) A_{p+1}\Bigg) \right| \\
	&= 2|A_{p+1}| \left| \bigcup_{i=0}^{1}\prod_{j=0}^{1} A_{p+1-\delta_{ij}} \right| 
		- |A_{p+1}|^{2}|A_{p}| \\
	&= 2\cdot\frac{m(p-1)+1+1}{m(p-1)+1} |A_{p+1}|^{2}|A_{p}| - |A_{p+1}|^{2}|A_{p}|\\
	&= \left(2\cdot\frac{m(p-1)+1+1}{m(p-1)+1}-1\right) |A_{p+1}|^{2}|A_{p}| \\
	&= \frac{m(p-1)+2+1}{m(p-1)+1} |A_{p+1}|^2|A_{p}|,
\end{align*}
which completes the second basis step on $k$. Now assume for induction that (\ref{eq: product k rec}) holds for $1\leq k \leq q$. Then we have for the induction step, $k=q+1$ by using Proposition \ref{prop: overlap} and the induction assumption
\begin{align*}
	&\left| \bigcup_{i=0}^{q+1}  \prod_{j=0}^{q+1} A_{p+1-\delta_{ij}} \right| = \\
	&= \left| \Bigg(A_{p+1} \Big(\bigcup_{i=0}^{q}\prod_{j=0}^{q} A_{p+1-\delta_{ij}}\Big)\Bigg)
		\bigcup
		\Bigg(\Big( \bigcup_{i=0}^{q}\prod_{j=0}^{q} A_{p+1-\delta_{ij}}\Big) A_{p+1}\Bigg) \right|\\
	&= 2|A_{p+1}| \left| \bigcup_{i=0}^{q}\prod_{j=0}^{q} A_{p+1-\delta_{ij}} \right| 
		- |A_{p+1}|^{2}  \left| \bigcup_{i=0}^{q-1}\prod_{j=0}^{q-1} A_{p+1-\delta_{ij}} \right| \\
	&= 2|A_{p+1}|\frac{m(p-1)+q+1}{m(p-1)+1}|A_{p+1}|^{q}|A_{p}| \\
		& \hspace{40mm}- |A_{p+1}|^{2} \frac{m(p-1)+q}{m(p-1)+1} |A_{p+1}|^{q-1}|A_{p}| \\
	&= \left(2\cdot \frac{m(p-1)+q+1}{m(p-1)+1} -\frac{m(p-1)+q}{m(p-1)+1}\right) |A_{p+1}|^{q+1}|A_{p}|\\
	&= \frac{m(p-1)+q+1+1}{m(p-1)+1}|A_{p+1}|^{q+1}|A_{p}|,
\end{align*}
which completes the induction on $k$, and thereby also the induction on $n$. 
\end{proof}

We can now give the explicit way of calculating the size of the set $A_n$, by simply unwinding the recursion given in (\ref{eq: An short rec}).

\begin{proposition}
For a fixed $m\geq 1$ we have for $n\geq 3$
\begin{equation}
	\label{eq: An explicit}
	|A_n| = \prod_{i=2}^{n-1} \big(m(n-i)+1\big)^{d_{i-1}}
\end{equation}
where $d_i = m\cdot d_{i-1} + d_{i-2}$ with $d_1 = 1$ and $d_2=m-1$.
\end{proposition}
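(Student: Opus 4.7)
The plan is a straightforward induction on $n$, using the simplified recursion
$|A_{n+1}| = \tfrac{m(n-1)+1}{m(n-2)+1}|A_n|^m |A_{n-1}|$ established in equation (\ref{eq: An short rec}). The content of the statement is essentially to unwind that recursion cleanly, so the real work is an index-shift calculation together with an application of the linear recurrence defining the $d_i$.

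\textbf{Base case.} Take $n=3$. From the recursion with $n=2$ together with $|A_1|=|A_2|=1$, one gets $|A_3|=m+1$. On the other side, the claimed product reduces to the single factor $(m\cdot 1+1)^{d_1}=(m+1)^1$, which agrees.

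\textbf{Inductive step.} Assume (\ref{eq: An explicit}) holds for $|A_n|$ and $|A_{n-1}|$; I want to derive it for $|A_{n+1}|$. First I would compute $|A_n|^m$, which simply multiplies every exponent by $m$, giving
\[
|A_n|^m=\prod_{i=2}^{n-1}\bigl(m(n-i)+1\bigr)^{m\,d_{i-1}}.
\]
For $|A_{n-1}|$, I would perform the substitution $i\mapsto i-1$ in the inductive formula, so that both products are indexed by the same quantity $m(n-i)+1$; this yields
\[
|A_{n-1}|=\prod_{i=3}^{n-1}\bigl(m(n-i)+1\bigr)^{d_{i-2}}.
\]
Multiplying these together, the factor at $i=2$ keeps exponent $m\,d_1=m$, while for $3\le i\le n-1$ the exponents combine as $m\,d_{i-1}+d_{i-2}=d_i$ by the defining recurrence of the $d_i$.

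\textbf{Combining with the prefactor.} Multiplying by $\tfrac{m(n-1)+1}{m(n-2)+1}$ introduces the new factor $m(n-1)+1$ with exponent $1=d_1$, and turns the exponent of $m(n-2)+1$ from $m$ into $m-1=d_2$. The resulting expression is
\[
|A_{n+1}|=\bigl(m(n-1)+1\bigr)^{d_1}\bigl(m(n-2)+1\bigr)^{d_2}\prod_{i=3}^{n-1}\bigl(m(n-i)+1\bigr)^{d_i}=\prod_{i=1}^{n-1}\bigl(m(n-i)+1\bigr)^{d_i},
\]
which, after the reverse index shift $i\mapsto i-1$, is precisely the right-hand side of (\ref{eq: An explicit}) with $n$ replaced by $n+1$.

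\textbf{Main obstacle.} There is no conceptual difficulty; the argument is purely symbolic. The only thing that requires care is making sure the two "boundary" indices line up: the prefactor must supply the new top factor $m(n-1)+1$ with exponent $d_1$ and correctly adjust the exponent of $m(n-2)+1$ from $m$ to $d_2=m-1$, while the interior exponents are absorbed by the recurrence $d_i=m\,d_{i-1}+d_{i-2}$. Once the index shift on $|A_{n-1}|$ is performed so that both products share the same base $m(n-i)+1$, everything matches automatically.
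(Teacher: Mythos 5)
Your proof is correct and follows essentially the same route as the paper: induction on $n$ using the recursion (\ref{eq: An short rec}), an index shift so both products share the base $m(n-i)+1$, the recurrence $d_i = m\,d_{i-1}+d_{i-2}$ to merge the interior exponents, and the prefactor to supply the two boundary exponents $d_1$ and $d_2$. The one point to patch is that a two-term recursion needs two base cases: your first inductive step (deriving $|A_4|$) invokes the formula for $|A_2|$, which lies outside the stated range $n\ge 3$, so you should either verify $n=4$ directly (as the paper does) or remark that the formula holds for $n=2$ as an empty product since $|A_2|=1$.
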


\begin{proof}
We give a proof by induction on $n$. For the basis case $n=3$ we have by the recursion (\ref{eq: An short rec})
\[	|A_3| = \frac{m(3-2)+1}{m(3-3)+1}|A_2|^m |A_1| = m(3-2)+1,
\]
and similarly for the basis case $n=4$
\begin{align*}
	|A_4| 
	&= \frac{m(4-2)+1}{m(4-3)+1}|A_3|^m |A_2| \\
	&= \frac{m(4-2)+1}{m(4-3)+1} (m(3-2)+1)^m \\
	&= (m(4-2)+1)(m(4-3)+1)^{m-1}.
\end{align*}
Now assume that (\ref{eq: An explicit}) holds for $3\leq n \leq p$. Then we have for the induction step, $n=p+1$, by (\ref{eq: An short rec}) and the induction assumption
\begin{align*}
	|&A_{p+1}| = \\ 
	&= \frac{m(p-1)+1}{m(p-2)+1}|A_p|^m |A_{p-1}|\\
	&= \frac{m(p-1)+1}{m(p-2)+1}
	\left(  \prod_{i=2}^{p-1} \big(m(p-i)+1\big)^{d_{i-1}} \right)^m 
	\left(  \prod_{i=2}^{p-2} \big(m(p-1-i)+1\big)^{d_{i-1}} \right) \\
	&= \frac{m(p-1)+1}{m(p-2)+1}
	\left(  \prod_{i=2}^{p-1} \big(m(p-i)+1\big)^{d_{i-1}} \right)^m 
	\left(  \prod_{i=3}^{p-1} \big(m(p-i)+1\big)^{d_{i-2}} \right) \\
	&= (m(p-1)+1)(m(p-2)+1)^{m-1}
	\left(  \prod_{i=3}^{p-1} \big(m(p-i)+1\big)^{m\cdot d_{i-1} + d_{i-2}} \right) \\
	&= (m(p-1)+1)^{d_1}(m(p-2)+1)^{d_2}
	\left(  \prod_{i=3}^{p-1} \big(m(p-i)+1\big)^{d_{i}} \right) \\
	&= (m(p-1)+1)^{d_1}(m(p-2)+1)^{d_2}
	\left(  \prod_{i=4}^{p} \big(m(p+1-i)+1\big)^{d_{i-1}} \right) \\
	&= \prod_{i=2}^{(p+1)-1} \big(m(p+1-i)+1\big)^{d_{i-1}},
\end{align*}
which completes the induction.
\end{proof}

\section{Prefix Sets}

In this section we study some basic structures of the sets of prefixes of words from $A_n$.

\begin{proposition} For a fixed $m\geq1$ and any $n\geq 3$ and $k\geq0$ we have
\label{prop: An prefix}
\begin{equation}
	\label{eq: An prefix}
	A_n[1,l_n-1] = A_{n+k}[1,l_n-1].
\end{equation}
\end{proposition}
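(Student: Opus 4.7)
The strategy is to establish the single-step identity $A_n[1,l_n-1] = A_{n+1}[1,l_n-1]$ for every $n\geq 3$ by strong induction on $n$, from which the stated equality for general $k\geq 0$ follows by a routine telescoping argument: applying the single-step identity at index $n+k$ and restricting both sides to the shorter prefix length $l_n-1\leq l_{n+k}-1$ gives $A_{n+k}[1,l_n-1]=A_{n+k+1}[1,l_n-1]$, and iterating on $k$ yields the result.

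The inclusion $A_n[1,l_n-1]\subseteq A_{n+1}[1,l_n-1]$ is immediate: given $a\in A_n$, the $i=1$ term in the union $A_{n+1}=\bigcup_{i=0}^{m} A_n^{i}A_{n-1}A_n^{m-i}$ contains $a\cdot A_{n-1}\cdot A_n^{m-1}$, so $a$ extends to a word in $A_{n+1}$ whose first $l_n$ characters reproduce $a$. For the reverse inclusion, take $a'\in A_{n+1}$ lying in some summand $A_n^{i}A_{n-1}A_n^{m-i}$. If $i\geq 1$, then $a'$ begins with a word in $A_n$ and the statement is trivial. The substantive case is $i=0$, where $a'=b\,c_1c_2\cdots c_m$ with $b\in A_{n-1}$ and $c_j\in A_n$, so $a'[1,l_n-1]=b\cdot c_1[1,l_n-1-l_{n-1}]$. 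Decomposing the $A_n$-word $c_1$ via the recursion as $c_1=e_0e_1\cdots e_m$, with $e_{i'}\in A_{n-2}$ for some $i'\in\{0,\ldots,m\}$ and the other blocks in $A_{n-1}$, I proceed by cases on $i'$.

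When $i'\leq m-1$, the arithmetic $l_n-1-l_{n-1}=(m-1)l_{n-1}+l_{n-2}-1$ shows $c_1[1,l_n-1-l_{n-1}]=e_0\cdots e_{m-2}\cdot e_{m-1}[1,|e_{m-1}|-1]$, and the required $a\in A_n$ is produced by a ``block-shift'': setting $a=b\,e_0e_1\cdots e_{m-1}$ places the $A_{n-2}$ block formerly at position $i'$ of $c_1$ at position $i'+1$ of $a$, so $a\in A_{n-1}^{i'+1}A_{n-2}A_{n-1}^{m-i'-1}$ and a direct verification confirms $a[1,l_n-1]$ matches the required prefix. The main obstacle is the remaining sub-case $i'=m$: the shift would require a non-existent position, and the relevant prefix equals $e_0\cdots e_{m-2}\cdot e_{m-1}[1,l_{n-2}-1]$, involving only a truncation of $e_{m-1}\in A_{n-1}$. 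One then takes $a\in A_{n-1}^m A_{n-2}$, writes $a=b\,e_0\cdots e_{m-2}\,f_m$ with $f_m\in A_{n-2}$, and must choose $f_m$ satisfying $f_m[1,l_{n-2}-1]=e_{m-1}[1,l_{n-2}-1]$. The existence of such an $f_m$ is exactly the containment $A_{n-1}[1,l_{n-2}-1]\subseteq A_{n-2}[1,l_{n-2}-1]$, which is the single-step identity at index $n-2$ supplied by the induction hypothesis (trivially satisfied when $n\in\{3,4\}$, since then $l_{n-2}-1\leq 0$).
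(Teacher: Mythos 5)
Your proof is correct and follows essentially the same route as the paper: both isolate the problematic $i=0$ summand $A_{n-1}A_n^m$ of $A_{n+1}$, resolve it by shifting the short $A_{n-2}$-block one position to the right (your element-wise ``block-shift'' is precisely the paper's set identity $A_nA_{n+1}\subseteq(A_{n+1}A_n)\cup(A_n^{m+1}A_{n-1})$), and handle the single exceptional sub-case (your $i'=m$) by invoking the statement two indices lower, with the same trivial base cases $n=3,4$ and the same telescoping in $k$. The only difference is presentational: you argue on individual words, whereas the paper carries out the identical manipulation at the level of the sets $\bigcup_{i}\prod_{j}A_{n-\delta_{ij}}$.
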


\begin{proof}
It is by definition clear that we have the inclusion $A_n[1,l_n-1] \subseteq A_{n+k}[1,l_n-1]$ for $n\geq 3$ and $k\geq0$. For the reversed inclusion we give a proof by induction on $k$ and on $n$. Let us first, in the basis case $k=1$, give two identities that we shall make use of later on in the proof. We have
\begin{align}
  A_{n+1}[1,l_n-1] 
	&= \left(\bigcup_{i=0}^{m}\prod_{j=0}^{m}A_{n-\delta_{ij}}\right)[1,l_n-1] \nonumber\\
	&= \left( (A_{n-1}A_n^m ) \bigcup \Big(\bigcup_{i=1}^{m}\prod_{j=0}^{m}A_{n-\delta_{ij}}\Big) \right)[1,l_n-1]\nonumber\\
	&= \big((A_{n-1}A_n)[1,l_n-1]\big) \bigcup \big(A_{n}[1,l_n-1]\big),  
	\label{eq: An prefix: first identity}
\end{align}
and secondly 
\begin{align}
 A_nA_{n+1} 
	&= A_{n} \bigcup_{i=0}^{m}\prod_{j=0}^{m}A_{n-\delta_{ij}} \nonumber\\
	&= \bigcup_{i=0}^{m}\prod_{j=-1}^{m}A_{n-\delta_{ij}} \nonumber\\
	&= \Big(\bigcup_{i=0}^{m-1}\prod_{j=-1}^{m}A_{n-\delta_{ij}}\Big)
	    \bigcup (A_n^{m+1}A_{n-1}) \nonumber\\
	&= \left( \Big(\bigcup_{i=0}^{m-1}\prod_{j=-1}^{m-1}A_{n-\delta_{ij}}\Big)A_n\right)
	    \bigcup (A_n^{m+1}A_{n-1}) \nonumber\\
	&\subseteq \left( \Big(\bigcup_{i=-1}^{m-1}\prod_{j=-1}^{m-1}A_{n-\delta_{ij}}\Big)A_n\right)
	    \bigcup (A_n^{m+1}A_{n-1}) \nonumber\\
	&= (A_{n+1}A_n) \bigcup (A_n^{m+1}A_{n-1}).
	\label{eq: An prefix: second identity}
\end{align}
Now let us turn to the first basis case $n=3$ in the induction on $n$. By the identity (\ref{eq: An prefix: first identity}) we have
\begin{align*}
  A_4[1,l_3-1] 
	&= \big((A_2A_3)[1,l_3-1]\big) \bigcup \big(A_3[1,l_3-1]\big)  \\
	&= \big((\{1\} A_3)[1,l_3-1] \big) \bigcup \big(A_{3}[1,l_3-1]\big)  \\
	&\subseteq \big( A_3[1,l_3-1]\big) \bigcup \big(A_{3}[1,l_3-1]\big)  \\
	&= A_{3}[1,l_3-1], 
\end{align*}
which concludes the case $n=3$. For the second basis case $n=4$ we have by (\ref{eq: An prefix: first identity})
\begin{align}
  A_5[1,l_4-1] 
	&= \big((A_3A_4)[1,l_4-1]\big) \bigcup \big(A_4[1,l_4-1]\big).
	\label{eq: An prefix: basis n=4}
\end{align}
The next step is to simplify the right hand side of (\ref{eq: An prefix: basis n=4}). By (\ref{eq: An prefix: second identity}) we have 
\begin{align*}
  (A_{3}A_{4})[1,l_{4}-1] 
	&\subseteq \big(( A_4 A_3) \bigcup (A_3^{m+1}A_2)\big)[1,l_4-1] \\
	&= \big( A_4[1,l_4-1] \big) \bigcup \big((A_3^{m+1}A_2)[1,l_4-1] \big)\\
	&= \big( A_4[1,l_4-1] \big) \bigcup A_3^m\\
	&=  A_4[1,l_4-1],
\end{align*}
which, combined by (\ref{eq: An prefix: basis n=4}) concludes the case. Now assume for induction on $n$ that (\ref{eq: An prefix}) holds for $k=1$ and $3 \leq n \leq p$. For the induction step $n=p+1$, we have by (\ref{eq: An prefix: first identity})
\begin{equation}
	A_{p+2}[1,l_{p+1}-1] = 
	\big((A_{p}A_{p+1})[1,l_{p+1}-1] \big) \bigcup \big(A_{p+1}[1,l_{p+1}-1] \big).
	\label{eq: union An prefix}
\end{equation}
As above, we aim to simplify (\ref{eq: union An prefix}). The identity (\ref{eq: An prefix: second identity}) and the induction assumption gives  
\begin{align}
  (A_{p}A_{p+1})&[1,l_{p+1}-1] \subseteq \nonumber\\
	&\subseteq \big(( A_{p+1}A_p) \bigcup (A_p^{m+1}A_{p-1})\big)[1,l_{p+1}-1] \nonumber\\
	&= \big(A_{p+1}[1,l_{p+1}-1]\big) \bigcup \big( A_p^{m+1} [1,l_{p+1}-1] \big) \nonumber\\
	&= \big(A_{p+1}[1,l_{p+1}-1]\big) \bigcup \Big( A_p^{m} \big(A_p[1,l_{p-1}-1] \big)\Big) \nonumber\\
	&= \big(A_{p+1}[1,l_{p+1}-1]\big) \bigcup \Big( A_p^{m} \big(A_{p-1}[1,l_{p-1}-1] \big)\Big) \nonumber\\
	&= \big(A_{p+1}[1,l_{p+1}-1]\big) \bigcup \big( (A_p^{m}A_{p-1})[1,l_{p+1}-1] \big) \nonumber\\
	&= A_{p+1}[1,l_{p+1}-1].
      \label{eq: An supset}
\end{align}
Combining (\ref{eq: union An prefix}) with (\ref{eq: An supset}) now gives 
\[	A_{p+2}[1,l_{p+1}-1] = A_{p+1}[1,l_{p+1}-1],
\]
which concludes the induction on $n$ and the basis case $k=1$ for the induction on $k$. Now assume for induction on $k$ that (\ref{eq: An prefix}) holds for $1\leq k \leq q$ and $n\geq 3$. Then we have for the induction step, $k=q+1$,  by using the induction assumption twice
\begin{align*}
	A_{n+q+1}[1,l_{n}-1]
	&= \big(A_{n+q+1}[1,l_{n+1}-1]\big)[1,l_{n}-1] \\
	&= \big(A_{(n+1)+q}[1,l_{n+1}-1]\big)[1,l_{n}-1] \\
	&= \big(A_{n+1}[1,l_{n+1}-1]\big)[1,l_{n}-1] \\
	&= A_{n+1}[1,l_{n}-1] \\
	&= A_{n}[1,l_{n}-1],
\end{align*}
which concludes the induction and the proof.
\end{proof}

The symmetry in the recursive definition of the $A_n$ sets (\ref{def: recursive def An}) gives that there clearly is a symmetric analogue of Proposition \ref{prop: An prefix}, that is the result holds for sets of suffixes. Next let us construct, by the help of \mbox{Proposition \ref{prop: An prefix}}, a superset to $A_n$ avoiding all the unions in the definition (\ref{def: recursive def An}).

\begin{proposition}
\label{prop: sup An}
Let 
\begin{align*}
	P_{n-1} &= \big(A_{n-1}[1,l_{n-1}-1]\big)\{0,1\},\\
	C_{n-1} &= \{0,1\}\big(A_{n-1}[2,l_{n-1}-1]\big)\{0,1\},\\
	S_{n-2} &= \{0,1\}\big(A_{n-2}[2,l_{n-2}]\big).
\end{align*}
Then for $n \geq 4 $ we have the inclusion
\[
	A_n \subseteq P_{n-1} C_{n-1}^{m-1} S_{n-2}.
\]
\end{proposition}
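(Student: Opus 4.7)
My plan is as follows. The lengths of the factors in $P_{n-1}C_{n-1}^{m-1}S_{n-2}$ sum to $ml_{n-1}+l_{n-2}=l_n$, so any factorisation of $x\in A_n$ into blocks of these lengths is forced to be $x=p\,c_1\cdots c_{m-1}\,s$ with $p=x[1,l_{n-1}]$, $c_j=x[jl_{n-1}+1,(j+1)l_{n-1}]$ for $1\le j\le m-1$, and $s=x[ml_{n-1}+1,l_n]$. I claim $p\in P_{n-1}$ follows at once from Proposition \ref{prop: An prefix}: with $n$ replaced by $n-1$ and $k=1$ it gives $A_{n-1}[1,l_{n-1}-1]=A_n[1,l_{n-1}-1]$, and the right-hand side contains $p[1,l_{n-1}-1]=x[1,l_{n-1}-1]$. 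Symmetrically, the suffix analogue of Proposition \ref{prop: An prefix} (invoked just before the statement) applied with $n\to n-2,\ k\to 2$ gives $A_{n-2}[2,l_{n-2}]=A_n[ml_{n-1}+2,l_n]$ and hence $s\in S_{n-2}$.

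The substantive task is to show that each middle chunk $c_j$ ($1\le j\le m-1$) lies in $C_{n-1}$, equivalently $c_j[2,l_{n-1}-1]\in A_{n-1}[2,l_{n-1}-1]$. I decompose $x=x_0x_1\cdots x_m$ according to (\ref{eq: rec def An}), with the unique $A_{n-2}$-block $x_i$ and $x_\ell\in A_{n-1}$ for $\ell\neq i$. For $j<i$ the chunk $c_j$ coincides with $x_j\in A_{n-1}$ and the interior claim is immediate. For $j\ge i$, reading off block boundaries in $x$ yields $c_j[2,l_{n-1}-1]=u\cdot v$, where $v=x_{j+1}[1,l_{n-1}-l_{n-2}-1]$ is a length-$(l_{n-1}-l_{n-2}-1)$ prefix of $x_{j+1}\in A_{n-1}$, and $u$ is either $x_i[2,l_{n-2}]$ when $j=i$ (hence already in $A_{n-2}[2,l_{n-2}]$) or the length-$(l_{n-2}-1)$ suffix $x_j[l_{n-1}-l_{n-2}+2,l_{n-1}]$ of $x_j\in A_{n-1}$ when $j>i$; the suffix analogue of Proposition \ref{prop: An prefix} with $n\to n-2,\ k\to 1$ puts this second case in $A_{n-2}[2,l_{n-2}]$ as well.

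The whole proposition then reduces to the inclusion
\[
  A_{n-2}[2,l_{n-2}]\cdot A_{n-1}[1,l_{n-1}-l_{n-2}-1]\ \subseteq\ A_{n-1}[2,l_{n-1}-1].
\]
To establish it I will, given $u$ and $v$, pick $\widetilde u\in A_{n-2}$ with $\widetilde u[2,l_{n-2}]=u$ and $w=\alpha_0\alpha_1\cdots\alpha_m\in A_{n-1}$ realising $v=w[1,l_{n-1}-l_{n-2}-1]$ (where $\alpha_{i'}$ denotes the distinguished $A_{n-3}$-block of $w$), and then build $w^{*}\in A_{n-1}$ by setting $w^{*}_0=\widetilde u$. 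If $i'<m$, I set $w^{*}_\ell=\alpha_{\ell-1}$ for $\ell=1,\ldots,m$; the resulting $w^{*}$ then has its $A_{n-3}$-block at position $i'+1\in\{1,\ldots,m\}$ and so lies in $A_{n-1}$, and a length count using the fact that $\alpha_0\cdots\alpha_{m-1}$ and $w$ share their length-$(l_{n-1}-l_{n-2}-1)$ prefix yields $w^{*}[l_{n-2}+1,l_{n-1}-1]=v$. If $i'=m$, I keep $w^{*}_\ell=\alpha_{\ell-1}$ for $\ell=1,\ldots,m-1$, but choose $w^{*}_m\in A_{n-3}$ with $w^{*}_m[1,l_{n-3}-1]=\alpha_{m-1}[1,l_{n-3}-1]$; the existence of such a block is guaranteed precisely by Proposition \ref{prop: An prefix} at level $n-3$, namely $A_{n-3}[1,l_{n-3}-1]=A_{n-2}[1,l_{n-3}-1]$, and a parallel length count again yields $w^{*}[l_{n-2}+1,l_{n-1}-1]=v$. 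In either case $w^{*}[2,l_{n-1}-1]=uv$, so $uv\in A_{n-1}[2,l_{n-1}-1]$.

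The hard part is the sub-lemma above, and within it the case $i'=m$: a naive block-by-block copy would place the $A_{n-3}$-block outside the admissible range of positions for $w^{*}$, so I am forced to truncate the final $A_{n-2}$-block of $w$ and replace it by an $A_{n-3}$-block sharing its first $l_{n-3}-1$ letters, and it is precisely at this step that Proposition \ref{prop: An prefix} must be invoked one level down.
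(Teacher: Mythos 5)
Your proof is correct, and its skeleton agrees with the paper's: both arguments exploit the fact that $B_n:=P_{n-1}C_{n-1}^{m-1}S_{n-2}$ is a product of fixed-length factors, so membership reduces to checking each length-$l_{n-1}$ (resp.\ $l_{n-2}$) chunk separately, with the first and last chunks handled exactly as you do, by Proposition \ref{prop: An prefix} and its suffix analogue. Where you genuinely diverge is in the middle chunks. The paper never isolates the interior of a single chunk; instead it works with prefixes of $T_i=A_{n-1}^iA_{n-2}A_{n-1}^{m-i}$ of increasing length, extending the containment $T_i[1,L]\subseteq B_n[1,L]$ one block boundary at a time via the identities (\ref{eq: sup An second})--(\ref{eq: sup An forth}), which re-express the length-$l_{n-2}$ suffix of $A_{n-1}$ through $A_{n-2}$ and back into $A_{n-1}$. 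You instead reduce everything to the clean sub-lemma $A_{n-2}[2,l_{n-2}]\cdot A_{n-1}[1,l_{n-1}-l_{n-2}-1]\subseteq A_{n-1}[2,l_{n-1}-1]$ and prove it by explicit block surgery on a witness word, including the non-obvious case $i'=m$ where the distinguished $A_{n-3}$-block must be replaced by one sharing its length-$(l_{n-3}-1)$ prefix. Your route is longer but more self-contained and arguably more transparent: the paper's step $A_n[1,(i+1)l_{n-1}]\subseteq B_n[1,(i+1)l_{n-1}]$ is asserted rather tersely, whereas your sub-lemma makes the required combinatorial fact explicit and verifiable. Two small points you should tidy up: (i) several of your invocations of Proposition \ref{prop: An prefix} (at levels $n-2$ and $n-3$) fall outside its stated range $n\geq 3$ when $n=4$ or $n=5$; in those cases the relevant prefix/suffix lengths $l_{n-2}-1$ or $l_{n-3}-1$ are $0$, so the claims are vacuous, but you should say so; (ii) like the paper, you lean on the unproved suffix analogue of Proposition \ref{prop: An prefix}, which is fair game here since the paper explicitly licenses it.
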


\begin{proof}
Let $B_n = P_{n-1} C_{n-1}^{m-1} S_{n-2}$ and let $T_i = A_{n-1}^iA_{n-2}A_{n-1}^{m-i}$ for $0 \leq i\leq m$. We have to show that $T_i\subseteq B_n$ for all $0 \leq i \leq m$. The case $i=m$ is direct by the construction, therefore we may assume $i<m$. By Proposition \ref{prop: An prefix} we have  
\begin{equation}
	T_i[1,(i+1) l_{n-1}] \subseteq A_{n}[1,(i+1) l_{n-1}] \subseteq B_{n}[1,(i+1) l_{n-1}].
	\label{eq: sup An first}
\end{equation}
Furthermore, we have from Proposition \ref{prop: An prefix}
\begin{align}
	A_{n-1}[1+l_{n-1}-l_{n-2}, l_{n-1}] 
	&\subseteq \{0,1\} \big(A_{n-1}[2+l_{n-1}-l_{n-2}, l_{n-1}]\big) \nonumber\\
	&=  \{0,1\} \big(A_{n-2}[2, l_{n-2}] \big) \label{eq: sup An second}\\
	&\subseteq  \{0,1\} \big(A_{n-1}[2, l_{n-2}] \big). \label{eq: sup An third}
\end{align}
Now combining (\ref{eq: sup An first}) and (\ref{eq: sup An third}) gives 
\[	T_i[1,(i+1) l_{n-1}+l_{n-2}] \subseteq B_{n}[1,(i+1) l_{n-1}+l_{n-2}].
\]
and in particular we have 
\begin{multline}
	T_i[1+i \cdot l_{n-1}+l_{n-2},(i+1) l_{n-1}+l_{n-2}] \subseteq \\
	\subseteq B_{n}[1+i \cdot l_{n-1}+l_{n-2},(i+1) l_{n-1}+l_{n-2}].
	\label{eq: sup An forth}
\end{multline}
Applying the inclusion (\ref{eq: sup An forth}) $m-1$ times and combining it the last time with (\ref{eq: sup An second}) concludes the proof.
\end{proof}

\section{Factor Sets}

The aim of this section is to give a finite method for finding the factor set $F_n$. 
In \cite{nilsson} we find a finite method for finding the factor set of $A_n$ in the case $m=1$.

\begin{proposition}
\label{prop: FAn+1 = FAnAn}
For a fixed $m\geq 1$ and all $n\geq3$ and $k\geq 1$ we have 
\begin{equation}
	\label{eq: FAn+1 = FAnAn}
	F(A_{n+k},l_n) = F(A_{n}^2,l_n).
\end{equation}
\end{proposition}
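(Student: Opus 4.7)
The plan is to establish (\ref{eq: FAn+1 = FAnAn}) as two set inclusions and handle the non-trivial direction by induction on $k$. The constant companion throughout is Proposition \ref{prop: An prefix} together with its evident suffix analogue $A_n[2,l_n]=A_{n+k}[l_{n+k}-l_n+2,l_{n+k}]$, which follows from the left-right symmetry of the recursion (\ref{eq: rec def An}) via the same induction used to prove Proposition \ref{prop: An prefix}.

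For the easy direction $F(A_n^2,l_n)\subseteq F(A_{n+k},l_n)$, I would unfold (\ref{eq: rec def An}) applied to $A_{n+k}$ enough times to exhibit two adjacent $A_n$-blocks inside some word of $A_{n+k}$. For $m\ge 2$ the summand $i=0$ of (\ref{eq: rec def An}) already gives $A_{n-1}A_n^m\subseteq A_{n+1}$, which contains adjacent $A_nA_n$; for $m=1$ one further unfolding works. Thus every $bc\in A_n^2$ occurs as a contiguous sub-factor of some element of $A_{n+k}$, and in particular every length-$l_n$ factor of $bc$ lies in $F(A_{n+k},l_n)$.

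For the hard direction $F(A_{n+k},l_n)\subseteq F(A_n^2,l_n)$ I would induct on $k$. In the base case $k=1$, write $w=a_0a_1\cdots a_m\in A_{n+1}$ with $a_{i^*}\in A_{n-1}$ and the remaining $a_j\in A_n$, and consider $f=w[p,p+l_n-1]$. If $f$ avoids $a_{i^*}$, it is a factor of two consecutive $A_n$-blocks and lies trivially in $F(A_n^2,l_n)$. Otherwise $f$ overlaps $a_{i^*}$, and to place $f$ in $F(A_n^2,l_n)$ we explicitly build $b,c\in A_n$ with $f$ a factor of $bc$, invoking the inclusion $A_n\supseteq A_{n-1}A_{n-2}A_{n-1}^{m-1}$ (and its reverse) to prescribe a given $A_{n-1}$-block as the prefix or suffix of an $A_n$-word, and using Proposition \ref{prop: An prefix} applied at level $n-1$ to match the remaining characters to the corresponding prefixes or suffixes of $a_{i^*-1}$ and $a_{i^*+1}$.

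For the inductive step $k\to k+1$, any length-$l_n$ factor of $w\in A_{n+k+1}$ sits within two adjacent blocks of the decomposition of $w$ given by (\ref{eq: rec def An}), both blocks being in $A_{n+k}\cup A_{n+k-1}$ of length at least $l_{n+k-1}\ge l_n$. Applying Proposition \ref{prop: An prefix} at level $n+k$ to promote an $A_{n+k-1}$-block into a matching prefix (or suffix) of an $A_{n+k}$-word, we may assume both blocks lie in $A_{n+k}$; the induction hypothesis, applied to each of them, then places $f$ in $F(A_n^2,l_n)$. The main obstacle is the three-block spanning subcase in the base step, when $f$ straddles $a_{i^*}$ together with portions of both $a_{i^*-1}$ and $a_{i^*+1}$: there the recursive structure $A_n\supseteq A_{n-1}^{i'}A_{n-2}A_{n-1}^{m-i'}$ must be unfolded and matched against $a_{i^*-1},a_{i^*},a_{i^*+1}$ via Proposition \ref{prop: An prefix} at level $n-1$, and the bookkeeping of positions needed to ensure that $f$ actually lands inside $bc$ is the most delicate part of the argument.
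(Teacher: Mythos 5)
Your overall strategy --- two inclusions, induction on $k$, a case split on whether the length-$l_n$ window meets the single short $A_{n-1}$-block, and Proposition \ref{prop: An prefix} as the matching device --- is essentially the one the paper uses. But as written there are two concrete gaps.

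First, you have the difficulty backwards when $m=1$. For $m=1$ a word of $A_{n+1}$ has length $l_{n+1}=l_n+l_{n-1}<2l_n$, so no element of $A_{n+1}$ contains two adjacent $A_n$-blocks; ``one further unfolding'' only exhibits $A_nA_n$ inside $A_{n+2}$, which gives $F(A_n^2,l_n)\subseteq F(A_{n+k},l_n)$ for $k\ge 2$ but not for $k=1$. The inclusion $F(A_n^2,l_n)\subseteq F(A_{n+1},l_n)$ is, for $m=1$, the genuinely non-trivial direction: one must show that every length-$l_n$ window of $A_nA_n$ already occurs inside the strictly shorter words of $A_{n+1}$, which the paper does by writing $A_n^2[i,i-1+l_n]=(A_nA_{n-1})[i,i-1+l_n]\subseteq A_{n+1}[i,i-1+l_n]$ for $1\le i<l_{n-1}$ via Proposition \ref{prop: An prefix}, with the remaining positions by symmetry. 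Your base case is therefore incomplete exactly where the paper has to work, and since $k=1$ feeds both your induction step and Proposition \ref{prop: Fn = FAA}, this must be repaired.

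Second, the mechanism of your induction step does not do what you claim. A length-$l_n$ factor of $w\in A_{n+k+1}$ that straddles the boundary between two adjacent blocks $b,c\in A_{n+k}$ lies in $F(A_{n+k}^2,l_n)$, not in $F(A_{n+k},l_n)$, so ``the induction hypothesis applied to each of them'' says nothing about it; you would need the stronger statement $F(A_{n+k}^2,l_n)\subseteq F(A_n^2,l_n)$. (The straddling case can in fact be handled directly: the two pieces of $f$ have lengths at most $l_n-1\le l_{n+k}-1$, so by Proposition \ref{prop: An prefix} and its suffix analogue they already agree with a suffix and a prefix of $A_n$-words; but that is a different argument from the one you state.) Likewise, ``promoting'' an $A_{n+k-1}$-block to an $A_{n+k}$-prefix via Proposition \ref{prop: An prefix} only matches the first $l_{n+k-1}-1$ symbols, so it fails whenever $f$ uses the last symbol of that block. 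The paper sidesteps all of this with the composition $F(A_{n+q+1},l_n)=F\big(F(A_{n+q+1},l_{n+1}),l_n\big)$, applying the statement first at window length $l_{n+1}$ and then descending; I recommend adopting that reduction for the step on $k$.
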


\begin{proof}
We give a proof by induction on $k$. For the basis case $k=1$ we look first at the special case $m=1$. It is clear that $F(A_{n+1},l_n)\subseteq F(A_{n}^2,l_n)$. For the reversed inclusion we have for 
$1\leq i < l_{n-1}$ by Proposition \ref{prop: An prefix},
\[	A^2_{n}[i,i-1+l_n] = (A_nA_{n-1})[i,i-1+l_n]\subseteq A_{n+1}[i,i-1+l_n].
\]
The remaining cases of $i$ follow by symmetry. 

For the case $m\geq 2$, it is clear that $F(A_{n}^2,l_n)\subseteq F(A_{n+1},l_n)$. For the reversed inclusion, we have by the recursive definition of the $A_n$ sets that a sub-word $w$ of length $l_n$ of a word in $A_{n+1}$ is an element of the set 
\begin{equation}
\label{eq: AnAnUAnAn-1U..}
	F(A_{n}^2,l_n)
	\bigcup F(A_{n-1}A_{n},l_n) 
	\bigcup F(A_{n}A_{n-1},l_n)
	\bigcup  F(A_{n}A_{n-1}A_n,l_n). 
\end{equation}
For all but the right-most set in (\ref{eq: AnAnUAnAn-1U..}) it is clear that they are subsets of $F(A^2_n,l_n)$. To deal with the remaining case assume $l_{n-1}< i \leq \lceil\frac{1}{2} (l_{n}+l_{n-1})\rceil$. 
This implies that 
\begin{equation}
	\label{eq: i length bound}
	i-1 + l_n \leq l_n + m\cdot l_{n-1},
\end{equation}
as $n\geq3$ and $m\geq2$. By the recursive definition of the $A_n$ sets (\ref{def: recursive def An}) and the bound (\ref{eq: i length bound}) on $i$ we obtain
\begin{align*}
	\big(A_{n}A_{n-1}&A_n\big)[i,i-1+l_n] = \\
	&= \left(A_{n}A_{n-1} \bigcup_{i=0}^{m}\prod_{j=0}^{m} A_{n-1-\delta_{ij}}  \right)[i,i-1+l_n] \\
	&= \left(A_{n} \bigcup_{i=0}^{m}\prod_{j=-1}^{m} A_{n-1-\delta_{ij}}  \right)[i,i-1+l_n] \\
	&= \left( \Big(A_{n} \bigcup_{i=0}^{m-1}\prod_{j=-1}^{m} A_{n-1-\delta_{ij}}\Big)  
	      \bigcup (A_nA_{n-1}^{m+1}A_{n-2}) \right)[i,i-1+l_n] \\
	&\subseteq \left( \Big(A_{n} \bigcup_{i=-1}^{m-1}\prod_{j=-1}^{m} A_{n-1-\delta_{ij}}\Big)  
	      \bigcup (A_nA_{n-1}^{m+1}A_{n-2}) \right)[i,i-1+l_n] \\
	&= \left( (A_{n}A_{n}A _{n-1})
	      \bigcup (A_nA_{n-1}^{m+1}A_{n-2}) \right)[i,i-1+l_n] \\
	&=  (A_{n}A_{n})[i,i-1+l_n]
	      \bigcup (A_nA_{n-1}^{m+1}) [i,i-1+l_n] \\
	&= (A_{n}A_{n})[i,i-1+l_n]  \bigcup (A_nA_{n-1}^{m}A_{n-2}) [i,i-1+l_n] \\
	&= \big(A_nA_n\big)[i,i-1+l_n] 
\end{align*}
Therefore we see that (\ref{eq: AnAnUAnAn-1U..}) is a subset to $F(A_{n}^2,l_n)$ when $l_{n-1}< i \leq \lceil\frac{1}{2} (l_{n}+l_{n-1})\rceil$. The remaining cases of $i$ follows by symmetry.

Now assume for induction that (\ref{eq: FAn+1 = FAnAn}) holds for $1\leq  k\leq q$. Then for $k= q+1$ we have by using the basis case and by the induction assumption
\begin{align*}
	F(A_{n+q+1},l_n) 
	&= F( F(A_{n+q+1},l_{n+1}), l_n)\\
	&= F( F(A_{n+1+1},l_{n+1}), l_n)\\
	&= F(A_{n+2}, l_n)\\
	&= F(A_{n+1}, l_n),
\end{align*}
which completes the induction.
\end{proof}

\begin{proposition}
\label{prop: Fn = FAA}
For a fixed $m\geq 1$ and all $n\geq 3$ we have 
\begin{equation}
	F_n = F(A_{n}^2,l_n).
\end{equation}
\end{proposition}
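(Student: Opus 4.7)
The plan is to prove both inclusions in $F_n = F(A_n^2, l_n)$, using Proposition \ref{prop: FAn+1 = FAnAn} (which gives $F(A_{n+k}, l_n) = F(A_n^2, l_n)$ for all $k\geq 1$) to reduce everything to factors of the finite generation sets $A_N$. The bridge between the infinite limit set $A$ and the finite $A_N$'s is provided by the prefix-stability established in Proposition \ref{prop: An prefix}.

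For the inclusion $F_n \subseteq F(A_n^2, l_n)$, take $u \in F(A, l_n)$, so that $u = w[a, a+l_n-1]$ for some infinite $w \in A$ and some position $a$. I would then choose $N$ large enough that $a+l_n-1 \leq l_N-1$. Since $w$ lies in the limit set, its length-$(l_N-1)$ prefix is, by Proposition \ref{prop: An prefix}, the corresponding prefix of some element of $A_N$. The factor $u$ then occurs inside that element of $A_N$, so $u \in F(A_N, l_n)$, which by Proposition \ref{prop: FAn+1 = FAnAn} equals $F(A_n^2, l_n)$.

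For the reverse inclusion $F(A_n^2, l_n) \subseteq F_n$, Proposition \ref{prop: FAn+1 = FAnAn} first lets me rewrite the set as $F(A_{n+1}, l_n)$, so any such $u$ is a factor of some $v\in A_{n+1}$. I would then argue that every $v \in A_{n+1}$ occurs as the length-$l_{n+1}$ prefix of some $v' \in A_{n+2}$: for any index $i \geq 1$ in the union $A_{n+2} = \bigcup_{i=0}^{m}\prod_{j=0}^{m}A_{n+1-\delta_{ij}}$, the slot at $j=0$ is an $A_{n+1}$ factor, which one may choose to be $v$. Since $l_{n+1} \leq l_{n+2}-1$, the prefix stability of Proposition \ref{prop: An prefix} carries $v$ through all higher generations consistently, yielding an infinite word $w \in A$ whose length-$l_{n+1}$ prefix is exactly $v$. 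Hence $u$, being a factor of $v$, is a factor of $w$, giving $u\in F_n$.

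The main delicate point is the careful passage between the infinite limit $A$ and the finite generation sets $A_N$: in one direction one must fit the factor $u$ strictly inside a stable prefix (so one can invoke Proposition \ref{prop: An prefix} without losing the last letter), and in the other direction one has to realise an arbitrary $A_{n+1}$ word as a prefix of some infinite word of $A$. Both rely on Proposition \ref{prop: An prefix}, while the structural recursion \eqref{eq: rec def An} is what allows the explicit embedding of $A_{n+1}$ into $A_{n+2}$ needed for the second direction.
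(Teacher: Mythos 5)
Your proposal is correct and follows essentially the same route as the paper: both directions are reduced to Proposition \ref{prop: FAn+1 = FAnAn}, with the passage between the limit set $A$ and the finite generations $A_{n+k}$ as the only remaining step. The paper dispatches that passage in one line (``there are words $u,v$ such that $uxv\in A_{n+k}$'' and ``it is clear that $F(A_{n+1},l_n)\subseteq F_n$''), whereas you spell it out via the prefix stability of Proposition \ref{prop: An prefix} and the explicit embedding of $A_{n+1}$ as a prefix block of $A_{n+2}$ --- a more careful rendering of the same argument.
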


\begin{proof}
It is clear that $F(A_{n+1},l_n) \subseteq F_n$. For the reverse inclusion let $x\in F_n$. Then there are words $u$ and $v$ such that $uxv\in A_{n+k}$ for some integer $k\geq1$. That is $x\in F(A_{n+k},l_n)$. Proposition 
\ref{prop: FAn+1 = FAnAn} now gives $F(A_{n+k},l_n) = F(A_{n+1},l_n) = F(A_{n}^2,l_n)$ completing the proof.
\end{proof}

\section{Upper bound}

The aim of this section is to give an upper bound of the size of $|F_n|$ in terms of $|A_n|$.

\begin{proposition}
\label{prop: A[k]A[k]leq A}
For a fixed $m\geq1$ and for any $n\geq 3$ we have for all $1\leq k \leq l_n-1$
\begin{equation}
	\big|A_n[1,k]\big| \cdot \big|A_n[k+1,l_n]\big| \leq  4^{mn}|A_n|.
	\label{eq: An[k]An[k] leq An}
\end{equation}
\end{proposition}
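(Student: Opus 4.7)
The plan is to proceed by induction on $n$, with base case $n=3$ handled by direct enumeration: $A_3 = \{1^i 0 1^{m-i}: 0 \leq i \leq m\}$ gives $|A_3[1,k]| = k+1$ and $|A_3[k+1,l_3]| = m-k+2$, whose product is polynomial in $m$ and thus trivially bounded by $4^{3m}(m+1) = 4^{3m}|A_3|$.

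For the inductive step, the key tool will be Proposition \ref{prop: sup An}, which provides the superset $A_n \subseteq B_n := P_{n-1} C_{n-1}^{m-1} S_{n-2}$. This product has a fixed block decomposition of $m+1$ blocks with lengths $l_{n-1}, l_{n-1}, \ldots, l_{n-1}, l_{n-2}$, consisting of one $P_{n-1}$-block, $m-1$ middle $C_{n-1}$-blocks, and one final $S_{n-2}$-block. Since $A_n[1,k] \subseteq B_n[1,k]$ and likewise for the suffix, it will suffice to bound $|B_n[1,k]| \cdot |B_n[k+1,l_n]|$, which I would do by case analysis on which block of $B_n$ contains the cut position $k$. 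In every case, the product factors into one contribution per block: the complete blocks lying entirely on one side of the cut contribute their trivial sizes $|P_{n-1}| \leq 2|A_{n-1}|$, $|C_{n-1}| \leq 4|A_{n-1}|$, and $|S_{n-2}| \leq 2|A_{n-2}|$, while the single block containing $k$ at internal position $r$ contributes a factor of the form $|X[1,r]|\cdot|X[r+1,l]|$. Stripping the leading and trailing $\{0,1\}$-letters in the defining expressions of $P_{n-1}$, $C_{n-1}$ or $S_{n-2}$ reduces this partial-block factor to a small constant (at most $4$) times either $|A_{n-1}[1,r]|\cdot|A_{n-1}[r+1,l_{n-1}]|$ or $|A_{n-2}[1,r]|\cdot|A_{n-2}[r+1,l_{n-2}]|$. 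At this point the inductive hypothesis applies, giving a bound of $4^{m(n-1)}|A_{n-1}|$ or $4^{m(n-2)}|A_{n-2}|$; combining with the trivial block contributions and invoking the inequality $|A_{n-1}|^m|A_{n-2}| \leq |A_n|$ (an immediate consequence of the recursion in Proposition \ref{prop: product k rec}) then yields the desired $4^{mn}|A_n|$.

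The main obstacle will be the careful bookkeeping of powers of $2$: across the three cases one picks up $2 \cdot 2 = 4$ from $|P_{n-1}| \cdot |S_{n-2}|$, $4^{m-1}$ or $4^{m-2}$ from the remaining complete $C_{n-1}$-blocks, and an additional factor of $2$ or $4$ from the partial-block surgery; these must combine with the inductive factor of $4^{m(n-1)}$ or $4^{m(n-2)}$ to give exactly $4^{mn}$ (or less). Boundary situations where $k$ coincides with a block boundary are handled without invoking the inductive hypothesis, since the partial-block surgery then degenerates to a product of size at most $2$.
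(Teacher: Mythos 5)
Your proposal is correct and follows essentially the same route as the paper: induction on $n$ with the $n=3$ case checked directly, and the inductive step carried out by cutting the superset $P_{n-1}C_{n-1}^{m-1}S_{n-2}$ from Proposition \ref{prop: sup An} at position $k$, bounding the complete blocks trivially, applying the induction hypothesis to the block containing the cut, and finishing with $|A_{n-1}|^m|A_{n-2}|\leq|A_n|$ from Proposition \ref{prop: product k rec}. The only difference is cosmetic: your bookkeeping of the powers of $2$ and of the block-boundary cases is somewhat more explicit than the paper's.
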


\begin{proof} By symmetry we only need to consider the case with $1\leq k \leq \lfloor \frac{1}{2} l_n \rfloor$. We give a proof by induction on $n$. For the basis case $n=3$ we have
\[	\big|A_3[1,k]\big| \cdot \big|A_3[k+1,l_3]\big| \leq |A_3|^2 = (m+1)^2 \leq 4^{3m}(m+1) = 4^{3m}|A_3|.
\]
Now assume for induction that (\ref{eq: An[k]An[k] leq An}) holds for $3\leq n\leq p$. For the induction step $n=p+1$ let $0\leq i \leq m-1$ be such that 
\[	i\cdot l_p \leq k < (i+1) \cdot l_p.
\]
If $i=0$ we have by Proposition \ref{prop: sup An}, the induction assumption and (\ref{eq: An short rec}), when reusing the notation from Proposition \ref{prop: sup An},
\begin{align*}
	\big|A_{p+1}&[1,k]\big| \cdot \big|A_{p+1}[k+1,l_{p+1}]\big|\leq \\
	&\leq \big|(P_pC_{p}^{m-1} S_{p-1})[1,k]\big| \cdot \big|(P_pC_{p}^{m-1} S_{p-1})[k+1,l_{p+1}]\big| \\
	&= \big|P_{p}[1, k]\big| \cdot \big|P_p[k+1,l_{p}]\big| |C_p|^{m-1} |S_{p-1}| \\
	&= \big|A_p[1,k]\big| \cdot \big|A_p[k+1, l_p]\big| |A_p|^{m-1} 4^m |A_{p-1}| \\
	& \leq 4^{mp}|A_p| 4^m|A_p|^{m-1}|A_{p-1}| \\
	&= 4^{m(p+1)} |A_p|^{m}|A_{p-1}| \\
	&\leq 4^{m(p+1)} |A_{p+1}|,
\end{align*}
completing the induction in this case. Similarly, for the case $i\geq 1$ we have
\begin{align*}
	\big|A_{p+1}&[1,k]\big| \cdot \big|A_{p+1}[k+1,l_{p+1}]\big| \leq \\
	&\leq \big|(P_pC_{p}^{m-1} S_{p-1})[1,k]\big| \cdot \big|(P_pC_{p}^{m-1} S_{p-1})[k+1,l_{p+1}]\big| \\
	&= \big|(P_pC_{p}^{i})[1, k]\big| \cdot \big|C_{p}^{m-i}[k+1-i\cdot l_p,(m-i)l_{p}]\big| |S_{p-1}| \\
	&= 4^{i} |A_p|^{i} \big|A_p[1,k-i\cdot l_p]\big|
			\cdot \big|A_p[k+1-i\cdot l_p, l_p]\big| |A_p|^{m-i-1} 4^{m-i} |A_{p-1}| \\
	&= 4^{m} |A_p|^{m-1} \big|A_p[1,k-i\cdot l_p]\big|
			\cdot \big|A_p[k+1-i\cdot l_p, l_p]\big| |A_{p-1}|\\
	& \leq 4^{m} |A_p|^{m-1} 4^{mp}|A_p| |A_{p-1}|  \\
	&=  4^{m(p+1)} |A_p|^{m}|A_{p-1}| \\
	&\leq 4^{m(p+1)} |A_{p+1}|,
\end{align*}
which completes the case and the proof.
\end{proof}

The estimate in Proposition \ref{prop: A[k]A[k]leq A} is probably far from optimal. Computer calculations lead us to conjecture that we may replace the factor $4^{mn}$ in (\ref{eq: An[k]An[k] leq An}) by a constant. 

We can now give the estimate we set out to find. Again compared to computer calculations our estimate seems to be far from the optimal one, but it will be sufficient for our purpose.

\begin{proposition}
\label{prop: F_n leq 4An}
For a fixed $m\geq 1$  we have for $n\geq3$ 
\[	|F_n| \leq 4^{mn}\cdot l_{n}\cdot |A_n|.
\]
\end{proposition}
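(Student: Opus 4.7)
The plan is to combine Proposition \ref{prop: Fn = FAA}, which identifies $F_n$ with $F(A_n^2, l_n)$, with the pointwise product bound in Proposition \ref{prop: A[k]A[k]leq A}, and to sum over the possible starting positions of a factor inside a concatenation $uv$ with $u,v\in A_n$.

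First I would observe that every factor $x$ of length $l_n$ occurring in some word $uv\in A_n^2$ begins at some position $i$ with $1\leq i\leq l_n+1$. For the two extreme positions $i=1$ and $i=l_n+1$, the factor $x$ equals $u$ or $v$ respectively, so each of these contributes at most $|A_n|$ factors. For an interior position $2\leq i\leq l_n$, the factor splits as $x = u[i,l_n]\, v[1,i-1]$, and the number of such factors is therefore bounded by $|A_n[i,l_n]|\cdot|A_n[1,i-1]|$.

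Next I would apply Proposition \ref{prop: A[k]A[k]leq A} with $k=i-1$ (which lies in the allowed range $1\leq k\leq l_n-1$) to obtain, for each interior position,
\[
    |A_n[1,i-1]|\cdot|A_n[i,l_n]| \leq 4^{mn}|A_n|.
\]
Summing the bounds over all $l_n+1$ starting positions gives
\[
    |F_n| \leq 2|A_n| + (l_n-1)\cdot 4^{mn}|A_n|.
\]
Finally, since $m,n\geq 1$ implies $4^{mn}\geq 4 \geq 2$, the boundary term $2|A_n|$ is absorbed into one extra copy of $4^{mn}|A_n|$, yielding $|F_n|\leq l_n\cdot 4^{mn}|A_n|$, as required.

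There is no real obstacle here; the work has been done in Propositions \ref{prop: Fn = FAA} and \ref{prop: A[k]A[k]leq A}. The only mild subtlety is handling the two boundary positions $i=1$ and $i=l_n+1$ where Proposition \ref{prop: A[k]A[k]leq A} does not directly apply, which is resolved by the trivial bound $|A_n|$ together with the cheap inequality $2\leq 4^{mn}$.
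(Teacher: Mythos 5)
Your proposal is correct and follows essentially the same route as the paper: reduce to $F(A_n^2,l_n)$ via Proposition \ref{prop: Fn = FAA}, bound each cut position by Proposition \ref{prop: A[k]A[k]leq A}, and sum over positions. Your explicit handling of the two boundary positions is just a slightly more careful version of the paper's bookkeeping (the paper folds both into the $k=0$ case of its $l_n$ values of $k$), and the final bound is the same.
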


\begin{proof}
By Proposition \ref{prop: Fn = FAA} it is enough to estimate the size of $F(A_n^2,l_n)$. Let $x$ be a subword of length $l_n$ in $A_n^2$. That is, there is a $0\leq k < l_n$ such that 
\[	x\in A_n^2[k+1,k+l_n] = \big(A_n[k+1,l_n]\big)\big(A_n[1,k]\big).
\]
By Proposition \ref{prop: A[k]A[k]leq A} there are at most $4^{mn}|A_n|$ such words $x$. Adding up for all possible values of $k$ gives the desired result.
\end{proof}

\section{Proof of the Main Result}

Finally, we have now gathered enough background to prove our theorem.

\begin{proof}[Proof of Theorem \ref{thm: lim An = lim Fn}]
It is already clear that the limit $\lim_{n\to\infty} \frac{\log|F_n|}{l_n}$ exists. By Proposition \ref{prop: F_n leq 4An} we have $|A_n| \leq |F_n| \leq 4^{mn}l_n |A_n|$ and by Proposition \ref{prop: l_n rec} we have 
\[	l_n = \frac{1}{\sqrt{m^2+4}}\left(\frac{m+\sqrt{m^2+4}}{2}\right)^n - \frac{1}{\sqrt{m^2+4}}\left(\frac{m-\sqrt{m^2+4}}{2}\right)^n
\]
from which it follows that $l_n$ grows with $n$ like $\left(\frac{m+\sqrt{m^2+4}}{2}\right)^n$. The monotonicity of the logarithm now implies
\[	0\leq \frac{\log |F_n|}{l_n} - \frac{\log |A_n|}{l_n} \leq \frac{\log (4^{mn}l_n)}{l_n} \to 0
\]
when $n\to\infty$, which gives that the limit $\lim_{n\to\infty} \frac{\log|A_n|}{l_n}$ exists and equals $\lim_{n\to\infty} \frac{\log|F_n|}{l_n}$.
\end{proof}

\section{Acknowledgement}
The author wishes to thank Michael Baake, Peter Zeiner and Markus Moll at Bielefeld University, Germany, for our discussions of the problem and for reading drafts of the manuscript. This work was supported by the German Research Council (DFG), via CRC 701.

\end{document}